\newtheorem{theorem}{Theorem}
\newtheorem{lemma}{Lemma}
\newtheorem{definition}{Definition}
\newtheorem{corollary}{Corollary}
\newtheorem{remark}{Remark}
\DeclareMathOperator*{\argmin}{arg\,min}
\DeclareMathOperator{\stat}{stat}
\newcommand{\Tr}{^{\top}}
\newcommand{\reg}{\texttt{Regret}}
\renewcommand{\Re}{\mathbb{R}}
\newcommand{\NN}{\mathbb{N}}
\newcommand{\dx}{\mathrm{d}}
\newcommand{\xx}{\mathbf{x}}
\newcommand{\xt}{\mathbf{x}_t}
\newcommand{\xtt}{\mathbf{x}_{t+1}}
\newcommand{\xtm}{\mathbf{x}_{t-1}}
\newcommand{\xs}{\xt^\ast}
\newcommand{\vv}{\mathbf{v}}
\renewcommand{\aa}{\mathbf{a}}
\renewcommand{\ss}{\mathbf{s}}
\newcommand{\MM}{\mathbf{M}}
\newcommand{\HH}{\mathbf{H}}
\newcommand{\hes}{\nabla^2 f_t(\xx_t)}
\newcommand{\XX}{\mathcal{X}}
\newcommand{\ox}{\overline{x}}
\newcommand{\ux}{\underline{x}}
\newcommand{\ov}{\overline{v}}
\newcommand{\oV}{\overline{V}}
\newcommand{\oE}{\overline{E}}
\newcommand{\uE}{\underline{E}}
\def\BibTeX{{\rm B\kern-.05em{\sc i\kern-.025em b}\kern-.08em
    T\kern-.1667em\lower.7ex\hbox{E}\kern-.125emX}}
\begin{document}
\title{Second-order Online Nonconvex Optimization}

\author{Antoine Lesage-Landry,~\IEEEmembership{Member, IEEE}, Joshua A. Taylor,~\IEEEmembership{Member, IEEE}, and Iman~Shames,~\IEEEmembership{Member, IEEE}
\thanks{This work was funded by the Fonds de recherche du Qu\'ebec -- Nature et technologies, the Ontario Ministry of Research, Innovation and Science and the Natural Sciences and Engineering Research Council of Canada, Defence Science and Technology Group, through agreement MyIP: ID9156 entitled ``Verifiable Hierarchical Sensing, Planning and Control'', the Australian Government, via grant AUSMURIB000001 associated with ONR MURI grant N00014-19-1-2571}
\thanks{A. Lesage-Landry is with the Energy \& Resources Group, University of California, Berkeley, CA, USA (e-mail: \texttt{alesagelandry@berkeley.edu}).}
\thanks{J.A. Taylor is with The Edward S. Rogers Sr. Department of Electrical \& Computer Engineering, University of Toronto, Ontario, Canada (e-mail: \texttt{josh.taylor@utoronto.ca}).}
\thanks{I. Shames is with the Department of Electrical and Electronic Engineering, University of Melbourne, Australia (e-mail:  \texttt{ishames@unimelb.edu.au}).}
\thanks{This work was done while A. Lesage-Landry was visiting the University of Melbourne, Australia.}
}

\maketitle

\begin{abstract}
We present the online Newton's method, a single-step second-order method for online nonconvex optimization. We analyze its performance and obtain a dynamic regret bound that is linear in the cumulative variation between round optima. We show that if the variation between round optima is limited, the method leads to a constant regret bound. In the general case, the online Newton's method outperforms online convex optimization algorithms for convex functions and performs similarly to a specialized algorithm for strongly convex functions. We simulate the performance of the online Newton's method on a nonlinear, nonconvex moving target localization example and find that it outperforms a first-order approach.
\end{abstract}

\begin{IEEEkeywords}
online nonconvex/convex optimization, time-varying optimization, Newton's method, moving target localization.
\end{IEEEkeywords}

\section{Introduction}
\label{sec:introduction}
\IEEEPARstart{I}{n} online or time-varying optimization one must sequentially provide decisions based only on past information. This problem arises in modeling resource allocation problems in networks~\cite{chen2017online,cao2018virtual}, real-time deployment in electric power systems~\cite{ma2016distributed,lesage2018setpoint}, and localization of moving targets problems as in~\cite{bedi2018tracking,dixit2019online}. 

We consider online optimization problems of the following form. Let $\xt \in \Re^n$ be the decision vector at time $t$. Let $f_t:\Re^n \mapsto \Re$ be a twice differentiable function. We do not require it to be convex. The problems are of the form
\begin{equation}
\min_{\xx_t} f_t\left( \xx_t \right) \label{eq:opt_pro}
\end{equation}
for $t = 1, 2, \ldots, T$ where $T$ is the time horizon. The decision maker must solve~\eqref{eq:opt_pro} at each round $t$ using the information from rounds $t-1, t-2, \ldots, 0$. The objective function $f_t$ is observed when the round $t$ ends. The goal is to provide real-time decisions when information, time and/or resources are too limited to solve~\eqref{eq:opt_pro}. We base our analysis on online convex optimization~\cite{zinkevich2003online,shalev2012online,hazan2016introduction} (OCO). We characterize our approach in terms of the dynamic regret, defined as
\[
\reg(T) = \sum_{t=0}^T f_t(\xt) - f_t(\xs),
\]
where $\xs \in \argmin_{\xx \in \XX} f_t(\xx)$. In the case of static regret, $\xs$ is replaced by $\xx^\ast \in \argmin_{\xx \in \XX} \sum_{t=1}^T f_t(\xx)$. The static regret does not capture changes in the optimal solution, and for this reason we only work with dynamic regret. Our objective is to design algorithms with a sublinear regret in the number of rounds. A sublinear regret implies that, on average, the algorithm plays the optimal decision~\cite{hazan2016introduction,mokhtari2016online}. In this work, we restrict ourselves to unconstrained problems. Constrained online optimization is a topic for future work.

We propose the online Newton's method (\texttt{ONM}) and show that its dynamic regret is $O\left(V_T + 1 \right)$ where $V_T$ is the cumulative variation between round optima. \texttt{ONM} is an online nonconvex optimization algorithm, which only assumes local Lipschitz properties. 
We acknowledge that, to date, no regret analysis has been given for first-order online approaches, e.g., online gradient descent~\cite{zinkevich2003online} (\texttt{OGD}), under the current assumptions. However, given their convexity requirement and the poor performance of \texttt{OGD} on the example in Section~\ref{sec:example}, we believe that first-order approaches are unlikely to have as good a bound as \texttt{ONM}.
We obtain a bound on the regret of \texttt{ONM} of the same order as OCO methods for strongly convex functions when the initial point is in a neighborhood of the global optimum, and the variation between stationary optima is bounded. We also provide a constant regret bound for settings where the total variation between round optima is small. Moreover, \texttt{OMN} can be used to solve problems of the form $\stat_{\xx} f_t\left( \xx \right)$, i.e., to track stationary points of $f_t$ under the aforementioned assumptions. In this case, $\xs \in \left\{ \left.\xx \in \Re^n \right| \nabla f_t \left(\xx \right) = \mathbf{0}\right\}$ in the dynamic regret definition, and the same regret analysis holds.

We present a numerical example in which \texttt{OMN} is used to track a moving target from noisy measurements (see~\cite{cao2006sensor} and references therein). The online moving target localization problem is nonconvex and thus conventional OCO algorithms have no guarantee on their performance. We test the performance of \texttt{ONM} on a moving target localization example and find that it outperforms a gradient-based OCO algorithm.

\emph{Related work.} To the best of the our knowledge, \texttt{ONM} is the first dynamic regret-bounded, single-step, second-order online approach. \texttt{ONM} applies to general smooth nonconvex functions. An online damped Newton method is proposed in~\cite{zhang2017improved} but requires the objective function to be strongly convex and self-concordant, and multiple Newton steps must be performed at each round to obtain a sublinear dynamic regret bound. Reference~\cite{gao2018online} developed gradient-based algorithms for weakly pseudo-convex functions. An online approach using multiple gradient step-like updates at each round is proposed in~\cite{hazan2017efficient} to minimize the local regret of nonconvex loss functions. Reference~\cite{hazan2007logarithmic} proposed the first Newton step-like approach, in which the Hessian is approximated by the outer product of the  gradient, but only provided a static regret analysis. This work provides the tightest regret bound to date at $O\left(\log T\right)$, but, to the best of our knowledge, no analysis based on dynamic regret has been published. Several authors have proposed dynamic regret bounded approaches. These approaches include, for example, \texttt{OGD}~\cite{zinkevich2003online}, in which the author proposed the first dynamic bound in terms of $V_T$, a specialized version for $\sigma$-strongly convex functions ($\sigma\texttt{OGD}$)~\cite{mokhtari2016online}, dynamic mirror descent (\texttt{DMD})~\cite{hall2015online}, a specialized optimistic mirror descent for predictable sequences (\texttt{OMD})~\cite{rakhlin2013online}, and several other context-specific algorithms, e.g.,~\cite{besbes2015non,jadbabaie2015online,yi2016tracking,shahrampour2017distributed,lesage2018predictive,li2018online,zhao2018proximal,bedi2018tracking,dixit2019online,bernstein2019online}. 

Related work in parametric optimization has also investigated Newton step-like methods for time-varying nonconvex optimization~\cite{zavala2010real,dontchev2013euler}. Reference~\cite{simonetto2016class} presents a discrete time-sampling method consisting of a prediction and a Newton-based correction step for continuous-time time-varying convex optimization. Reference~\cite{tang2018feedback} proposed a regularized primal-dual algorithm to track solutions of time-varying nonconvex optimization problems. A time-varying Quasi-Newton method was investigated in the context of the nonconvex optimal power flow in~\cite{tang2017real}. Their approach, however, requires the approximate solution of a quadratic program at each round instead of a single update rule as in the present work. Our work also differs from the aforementioned references in that our algorithm can track any type of stationary point, and we characterize its performance in terms of dynamic regret.

\section{Preliminaries}
In this section, we introduce the second-order update and several technical results we will use to prove the main theorems.

\subsection{Background}

Let $\HH_t\left(\xt\right) = \hes$, the Hessian matrix at round $t$. The online Newton's method update is a Newton step with a unit step size. Throughout this work, we assume that the update is defined for all $t=1,2,\ldots,T$.
\begin{definition}[Online Newton update]
The online Newton update is:
\[
\xtt = \xt - \HH_{t}^{-1} \left(\xt \right) \nabla f_t \left( \xt \right).
\]
\label{def:newton_update}
\end{definition}

Let $\vv_t \in \Re^n$, the variation in optimum, be defined as:
\begin{align}
\xtt^\ast &= \xs + \vv_t. \label{def:v}
\end{align}
The total variation is
\[
V_T = \sum_{t=1}^T \left\| \xtm^\ast - \xs \right\| = \sum_{t=0}^{T-1} \left\| \vv_t \right\|.
\]
Define $\overline{v}$ to be the maximum variation between two rounds, and $\oV$ to be the maximum total variation. Then $\overline{v} = \max_{t \in \{1,2,\ldots,T \}} \left\|  \vv_t \right\|$, and $V_T \leq \overline{V}$. 

\subsection{Online Newton update}
We first provide a lemma, which we use to derive bounds on the \texttt{ONM} regret.

\begin{lemma}
\label{lem:matrix_norm}{\cite[Section 2.7, Problem 7]{kreyszig1978introductory}}
Let $\MM \in \Re^{n \times n}$ be a symmetric matrix and $h > 0$. Then, 
\[
\left\| \MM^{-1} \right\| \leq \frac{1}{h} \iff \left\| \MM \vv \right\| \geq h \left\|\vv \right\|\  \forall \; \vv \in \Re^{n}.
\]
\end{lemma}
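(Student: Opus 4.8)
The plan is to read $\norm{\cdot}$ as the operator (spectral) norm induced by the Euclidean vector norm and to prove the two implications separately. The key observation is that the right-hand inequality is precisely the statement that $\MM$ shrinks no nonzero vector below a factor $h$, which is the reciprocal of the condition that $\MM^{-1}$ stretches no vector by more than $1/h$. So each direction should reduce to a one-line application of the definition of the induced norm after the correct substitution.

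For the forward direction ($\Rightarrow$), I would assume $\MM^{-1}$ exists with $\norm{\MM^{-1}} \le 1/h$. Given any $\vv \in \Re^n$, I set $\uu = \MM\vv$, so that $\vv = \MM^{-1}\uu$, and apply the submultiplicative bound of the operator norm: $\norm{\vv} = \norm{\MM^{-1}\uu} \le \norm{\MM^{-1}}\,\norm{\uu} \le \tfrac{1}{h}\norm{\MM\vv}$. Rearranging yields $\norm{\MM\vv} \ge h\norm{\vv}$, valid for every $\vv$.

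For the reverse direction ($\Leftarrow$), I would first use the hypothesis $\norm{\MM\vv} \ge h\norm{\vv}$ to establish invertibility: if $\MM\vv = \0$ then $h\norm{\vv} \le 0$ forces $\vv = \0$, so $\MM$ is injective and, being square, invertible, which makes $\MM^{-1}$ well defined. Then for arbitrary $\uu \in \Re^n$ I set $\vv = \MM^{-1}\uu$, so the hypothesis reads $\norm{\uu} = \norm{\MM\vv} \ge h\norm{\MM^{-1}\uu}$, i.e. $\norm{\MM^{-1}\uu} \le \tfrac{1}{h}\norm{\uu}$. Taking the supremum over unit vectors $\uu$ gives $\norm{\MM^{-1}} \le 1/h$ by the definition of the induced norm.

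The argument above never actually uses symmetry; the one point requiring care is the invertibility step in the reverse direction, where I must invoke that an injective linear map on $\Re^n$ is automatically surjective \emph{before} the substitution $\vv = \MM^{-1}\uu$ is legitimate — this is the main (and only) potential obstacle. If one prefers to exploit the symmetry hypothesis explicitly, an alternative is to diagonalize $\MM$ in an orthonormal eigenbasis with real eigenvalues $\lambda_i$: then $\norm{\MM^{-1}} = 1/\min_i |\lambda_i|$, and expanding $\vv = \sum_i c_i$ in that basis gives $\norm{\MM\vv}^2 = \sum_i \lambda_i^2 c_i^2$, so both sides of the equivalence collapse to the single condition $\min_i |\lambda_i| \ge h$ (necessity following by testing $\vv$ against individual eigenvectors). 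I expect the operator-norm route to be the cleaner of the two and would present it as the main proof.
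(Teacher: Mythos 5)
Your proof is correct. Note that the paper itself offers no proof of this lemma --- it is imported verbatim as a solved exercise from Kreyszig \cite[Section 2.7, Problem 7]{kreyszig1978introductory} --- so there is no in-paper argument to compare against; your operator-norm substitution ($\uu = \MM\vv$ one way, $\vv = \MM^{-1}\uu$ the other) is precisely the standard solution to that exercise, stated there for bounded operators on normed spaces. Your two added observations are also sound and worth keeping: the reverse direction does require establishing invertibility first, and your injective-implies-surjective step is exactly what replaces Kreyszig's standing assumption that the inverse exists (in infinite dimensions injectivity alone would not suffice, so this step is genuinely where finite dimensionality enters); and symmetry is indeed never used, which means the lemma as stated is weaker than what your argument delivers --- the paper presumably retains the hypothesis only because the matrices it is applied to are Hessians. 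The spectral-decomposition alternative you sketch is also valid (modulo the typo $\vv = \sum_i c_i$, which should read $\vv = \sum_i c_i \uu_i$ for the orthonormal eigenvectors $\uu_i$), but as you say, the norm argument is cleaner and more general.
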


We now use the previous lemma and derive the following identities regarding the Newton update.
\begin{lemma}[Online Newton update identities]
\label{lem:newton_update}
Suppose:
\begin{enumerate}
  \item $\exists h_t > 0$ such that $\left\| \HH_{t}^{-1} \left(\xs \right) \right\| \leq \frac{1}{h_t}$;
  \item $\exists \beta_t, L_t >0$ such that 
\end{enumerate}
\[
\left\| \xx - \xs \right\| \leq \beta_t \implies \left\| \HH_t\left(\xx\right) - \HH_t\left( \xs \right) \right\| \leq L_t \left\| \xx - \xs \right\|;
\]
\begin{enumerate}
\setcounter{enumi}{2}	
  \item $\left\| \xt - \xs \right\| \leq \gamma_t = \min\left\{\beta_t, \frac{2h_t}{3L_t} \right\}$. 
\end{enumerate}
Then, for the online Newton update, we have the following two identities:
\begin{align}
\left\| \xtt - \xs \right\| &< \left\| \xs - \xt \right\| \label{eq:second}\\
\left\| \xtt - \xs \right\| &\leq \frac{3L_t}{2h_t} \left\| \xs - \xt \right\|^2 \label{eq:third}
\end{align}

\end{lemma}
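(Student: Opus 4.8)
The plan is to run the classical quadratic-convergence argument for Newton's method, localized to round $t$. The one structural fact I would use at the outset is that $\xs$ is a stationary point of $f_t$, so $\nabla f_t(\xs) = \0$; this lets me replace the gradient in the update by the difference $\nabla f_t(\xt) - \nabla f_t(\xs)$ and bring in Taylor's theorem.

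First I would bound the inverse Hessian that actually appears in the step, $\HH_t^{-1}(\xt)$, even though the hypotheses only control $\HH_t^{-1}(\xs)$. By Lemma~\ref{lem:matrix_norm}, the first hypothesis is equivalent to $\norm{\HH_t(\xs)\vv} \geq h_t\norm{\vv}$ for every $\vv \in \Re^n$. Since $\norm{\xt - \xs} \leq \gamma_t \leq \beta_t$, the second hypothesis applies, and the reverse triangle inequality gives
\[
\norm{\HH_t(\xt)\vv} \geq \norm{\HH_t(\xs)\vv} - \norm{\big(\HH_t(\xt) - \HH_t(\xs)\big)\vv} \geq \big(h_t - L_t\norm{\xt - \xs}\big)\norm{\vv}.
\]
The third hypothesis forces $\norm{\xt - \xs} \leq \tfrac{2h_t}{3L_t}$, so the coefficient is at least $h_t/3$; reading Lemma~\ref{lem:matrix_norm} in the other direction then yields $\norm{\HH_t^{-1}(\xt)} \leq 3/h_t$, which in particular shows that the update is well defined.

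Next I would expand the error. Subtracting $\xs$ from the update in Definition~\ref{def:newton_update} and inserting $\nabla f_t(\xs) = \0$ gives
\[
\xtt - \xs = \HH_t^{-1}(\xt)\Big[\HH_t(\xt)(\xt - \xs) - \big(\nabla f_t(\xt) - \nabla f_t(\xs)\big)\Big],
\]
and the integral form of Taylor's theorem, $\nabla f_t(\xt) - \nabla f_t(\xs) = \int_0^1 \HH_t\big(\xs + \tau(\xt - \xs)\big)(\xt - \xs)\,\dx\tau$, turns the bracket into $\int_0^1 \big[\HH_t(\xt) - \HH_t(\xs + \tau(\xt - \xs))\big](\xt - \xs)\,\dx\tau$. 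Bounding the Hessian difference between $\xt$ and the intermediate point (both within $\beta_t$ of $\xs$) by $L_t(1-\tau)\norm{\xt - \xs}$ and integrating $\int_0^1(1-\tau)\,\dx\tau = \tfrac12$ leaves a bracket of norm at most $\tfrac{L_t}{2}\norm{\xt - \xs}^2$. Multiplying by $\norm{\HH_t^{-1}(\xt)} \leq 3/h_t$ gives \eqref{eq:third}. Estimate~\eqref{eq:second} is then immediate: writing \eqref{eq:third} as $\norm{\xtt - \xs} \leq \big(\tfrac{3L_t}{2h_t}\norm{\xt - \xs}\big)\norm{\xt - \xs}$, the third hypothesis makes the contraction factor at most one, and it is strictly below one (hence \eqref{eq:second} is strict) once $\xt \neq \xs$ and $\norm{\xt - \xs}$ lies strictly inside the radius $\tfrac{2h_t}{3L_t}$ — the boundary case is the one wrinkle I would treat separately.

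The step I expect to be delicate is bookkeeping the constant $3L_t/2h_t$, because every hypothesis is anchored at $\xs$ while the update and its remainder naturally call for estimates anchored at $\xt$ and at the intermediate points $\xs + \tau(\xt - \xs)$. The clean constant requires the factor $3$ to be spent once, in $\norm{\HH_t^{-1}(\xt)} \leq 3/h_t$, and the remainder to cost only $\tfrac12$; this in turn needs the second hypothesis to be used as genuine $L_t$-Lipschitz continuity of $\HH_t$ on the $\beta_t$-ball, so that $\HH_t(\xt)$ may be compared directly to $\HH_t(\xs + \tau(\xt - \xs))$. If instead one only compared each term back to $\HH_t(\xs)$ through the triangle inequality, the remainder would inflate to $\tfrac{3L_t}{2}\norm{\xt - \xs}^2$ and the bound would degrade by a factor of three, so I would be careful to route every Hessian comparison through the sharpest available Lipschitz estimate and to check that all intermediate points stay inside the radius where that estimate holds.
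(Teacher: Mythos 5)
Your proposal is correct and follows essentially the same route as the paper's proof: you bound $\left\|\HH_t^{-1}(\xt)\right\|$ via Lemma~\ref{lem:matrix_norm} together with the reverse triangle inequality anchored at $\xs$, express the Newton error through the integral form of Taylor's theorem (your parametrization $\xs + \tau(\xt - \xs)$ is the paper's $\xt + \tau(\xs - \xt)$ under the substitution $\tau \mapsto 1-\tau$), and bound the resulting Hessian-difference integral by $\tfrac{L_t}{2}\left\|\xs - \xt\right\|^2$ to obtain \eqref{eq:third}, with \eqref{eq:second} then following from the contraction factor $\tfrac{3L_t}{2h_t}\left\|\xt - \xs\right\|$. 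The two subtleties you flag --- that the second hypothesis must be read as genuine $L_t$-Lipschitz continuity of $\HH_t$ on the ball (not merely anchored at $\xs$, which would degrade the remainder constant), and that strictness in \eqref{eq:second} degenerates when $\xt = \xs$ or $\left\|\xt - \xs\right\| = \tfrac{2h_t}{3L_t}$ --- are both present in, and glossed over by, the paper's own proof, so your treatment is if anything more careful.
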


\begin{proof}
Consider the online Newton update given in Definition~\ref{def:newton_update}. Subtracting $\xs$ on both side leads to
\begin{align*}
\xtt - \xs &= \xt - \xs - \HH_{t}^{-1} \left(\xt \right) \nabla f_t \left( \xt \right)\\
&= \xt - \xs \\
&\quad- \HH_{t}^{-1} \left(\xt \right) \nabla f_t \left( \xt \right) + \HH_{t}^{-1} \left(\xt \right) \nabla f_t \left( \xs \right)\\
&= \xt - \xs + \HH_{t}^{-1} \left(\xt \right) \left( \nabla f_t \left( \xs \right) - \nabla f_t \left( \xt \right) \right).
\end{align*}
By the fundamental theorem of calculus, we have
\begin{align*}
\xtt - \xs &= \xt - \xs \ +\\ 
&\HH_{t}^{-1} \left(\xt \right) \int_0^1 \HH_{t} \left(\xt + \tau \left( \xs - \xt \right) \right) \left( \xs - \xt \right) \dx \tau.
\end{align*}
Consequently,
\begin{align*}
\xtt - \xs &= \HH_{t}^{-1}\left(\xt\right) \HH_{t}\left(\xt\right) \left(\xt - \xs\right) \ +\\
& \HH_{t}^{-1} \left(\xt \right) \int_0^1 \HH_{t} \left(\xt + \tau \left( \xs - \xt \right) \right) \left( \xs - \xt \right) \dx \tau\\
&= \HH_{t}^{-1}\left(\xt\right) \int_0^1 \HH_{t}\left(\xt\right) \left(\xt - \xs\right) \dx \tau \ +\\
& \HH_{t}^{-1} \left(\xt \right) \int_0^1 \HH_{t}\left(\xt + \tau \left( \xs - \xt \right) \right) \left( \xs - \xt \right) \dx \tau\\
&= \HH_{t}^{-1}\left(\xt\right) \int_0^1 \left(\HH_{t} \left(\xt + \tau \left( \xs - \xt \right) \right)\right.\\
&\qquad\qquad\qquad\qquad\qquad \left.- \HH_{t}\left(\xt\right)\right) \left( \xs - \xt \right) \dx \tau.
\end{align*}
Taking the norm of both sides, we have
\begin{align*}
\left\| \xtt - \xs \right\| &= \left\|\HH_{t}^{-1}\left(\xt\right) \int_0^1 \left(\HH_{t} \left(\xt + \tau \left( \xs - \xt \right) \right)\right.\right.\\
&\qquad\qquad\qquad\qquad \left.\left.- \HH_{t}\left(\xt\right)\right) \left( \xs - \xt \right) \dx \tau \vphantom{\int_0^1}\right\| \\
&\leq \left\|\HH_{t}^{-1}\left(\xt\right) \right\| \left\| \int_0^1 \left(\HH_{t} \left(\xt + \tau \left( \xs - \xt \right) \right)\right.\right.\\
&\qquad\qquad\qquad\qquad \left.\left.- \HH_{t}\left(\xt\right)\right) \left( \xs - \xt \right) \dx \tau \vphantom{\int_0^1}\right\|.
\end{align*}
The second and third assumptions allow us to upper bound the argument of the integral using Lipschitz continuity of the Hessian. This leads to
\begin{align}
\left\| \xtt - \xs \right\| &\leq \left\| \HH_{t}^{-1}\left(\xt\right) \right\| \int_0^1 \tau L_t  \left\| \xs - \xt \right\|^2 \dx \tau \nonumber\\
&= \left\| \HH_{t}^{-1}\left(\xt\right) \right\| \frac{ L_t}{2} \left\| \xs - \xt \right\|^2. \label{eq:first_bound}
\end{align}

We now derive a lower bound on the norm of the matrix product $\HH_{t}\left(\xt\right) \vv$ for all $\vv \in \Re^n$. We have
\begin{align*}
\left\| \HH_{t}\left(\xt\right) \vv \right\| &= \left\| \HH_{t}\left(\xt\right) \vv + \HH_{t}\left(\xs\right) \vv - \HH_{t}\left(\xs\right) \vv \right\| \\
&\geq \left\| \HH_{t}\left(\xs\right) \vv \right\|- \left\| \HH_{t}\left(\xt\right) \vv - \HH_{t}\left(\xs\right) \vv \right\|\\
&\geq \left\| \HH_{t}\left(\xs\right) \vv \right\| - \left\| \HH_{t}\left(\xt\right) - \HH_{t}\left(\xs\right)\right\| \left\| \vv \right\|,
\end{align*}
where we have used the reverse triangle inequality. By Lemma~\ref{lem:matrix_norm}, we have
\begin{align*}
\left\| \HH_{t}\left(\xt\right) \vv \right\| &\geq h_t \left\| \vv \right\|- L_t\left\| \xt - \xs \right\| \left\| \vv \right\| \\
&= \left(h_t - L_t\left\| \xt - \xs \right\| \right)\left\| \vv \right\|.
\end{align*}
We use the converse of Lemma~\ref{lem:matrix_norm} to obtain
\begin{equation}
\left\| \HH_{t}^{-1}\left(\xt\right)\right\| \leq \frac{1}{h_t - L_t\left\| \xt - \xs \right\|}.
\label{eq:bound_h1}
\end{equation}
Substituting~\eqref{eq:bound_h1} in~\eqref{eq:first_bound}, we obtain:
\begin{equation}
\left\| \xtt - \xs \right\| \leq \frac{L_t}{2\left(h_t - L_t\left\| \xt - \xs \right\|\right)} \left\| \xs - \xt \right\|^2. 
\label{eq:first_result}
\end{equation}
By assumption, $\left\| \xtt - \xs \right\| < \frac{2h}{3L_t}$ , and therefore~\eqref{eq:first_result} can be bounded above as follows.
\begin{align*}
\left\| \xtt - \xs \right\| &\leq \frac{L_t \left\| \xs - \xt \right\|}{2\left(h_t - L_t\left\| \xt - \xs \right\|\right)} \left\| \xs - \xt \right\|\\
&< \frac{L_t \frac{2h_t}{3L_t}}{2\left(h_t - L_t \frac{2h_t}{3L_t}\right)} \left\| \xs - \xt \right\|.
\end{align*}
Hence, we obtain
\begin{equation}
\left\| \xtt - \xs \right\| < \left\| \xs - \xt \right\|,
\end{equation}
which is the first identity.
Bounding the denominator of~\eqref{eq:first_result} yields the second identity:
\begin{align*}
\left\| \xtt - \xs \right\| &\leq \frac{L_t}{2\left(h_t - L_t \frac{2h}{3L_t}\right)} \left\| \xs - \xt \right\|^2 \\
&= \frac{3L_t}{2h_t} \left\| \xs - \xt \right\|^2.
\end{align*}
This completes the proof.
\end{proof}

Newton's method and our online extension can only be used if the Hessian is invertible along the algorithm's path, and by construction, also invertible at the point the algorithm converges to, $\xs$. For minimization, this translates to a function that is locally strongly convex in a ball of radius $\epsilon_t$ around $\xs$ for all $t$ (Assumption 1 of Lemma~\ref{lem:newton_update}). Without loss of generality assume that $\beta_t=\gamma_t= 2h_t/3L_t$.  For the Newton’s method to converge, it is enough for it to be initialized in a ball of radius $\gamma_t$ around $\xs$ (Assumption 3 of Lemma~\ref{lem:newton_update}). Note that the $\gamma_t$-ball does not need to be a subset of the aforementioned $\epsilon_t$-ball~\cite[Section 1.4]{bertsekas1999nonlinear}. Newton’s method converges to the solution even if it is initialized at a point outside the set where the Hessian is positive definite. The Hessian of the loss function $f_t$ is thus not necessarily positive definite for all $\xt$, $t=1,2,\ldots, T$. One may extend the results presented in this manuscript for the cases where the Hessian is not invertible by introducing modifications of the type discussed in~\cite[Section 3.4]{nocedal2006numerical}.

In brief, the previous result shows that if the Hessian is nonsingular and locally Lipschitz at $\xs$ and the initial point $\xt$ is close enough to $\xs$, then the online Newton update at $t$ moves strictly closer to $\xs$. Second, it shows that the update approaches $\xs$ at a quadratic rate.

The next result specifies a sufficient condition under which the assumptions of Lemma~\ref{lem:newton_update} at round $t$ will also be satisfied for round $t+1$. The next results can be derived with time-dependent parameters, but we use constant parameters to simplify notation.

\begin{lemma}
Suppose $\left\| \xt - \xs \right\| \leq \gamma$ and $\gamma \in \left( 0 , \frac{2h}{3L}\right)$. If the online Newton update is used and $\overline{v} \leq \gamma - \frac{3L}{2h}\gamma^2$, then $\left\| \xx_{t+1} - \xx_{t+1}^\ast \right\| < \gamma$.
\label{lem:always_sufficient}
\end{lemma}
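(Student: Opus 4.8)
The plan is to reduce the claim to the quadratic-contraction identity~\eqref{eq:third} of Lemma~\ref{lem:newton_update} followed by a single triangle inequality that separates the progress of the Newton step toward $\xs$ from the drift of the optimizer from $\xs$ to $\xtt^\ast$. Before invoking~\eqref{eq:third}, I would verify that the hypotheses of Lemma~\ref{lem:newton_update} are in force at round $t$: with the constant-parameter convention $\beta=\gamma_t=2h/3L$ adopted in the remark following that lemma, the standing assumptions on the Hessian supply Assumptions~1 and~2, while the present hypothesis $\|\xt-\xs\|\leq\gamma$ together with $\gamma<2h/3L$ gives $\|\xt-\xs\|<2h/3L$, so Assumption~3 holds and~\eqref{eq:third} is available. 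I would flag the mild notational clash here: the $\gamma$ of this lemma is a radius lying \emph{strictly} inside the convergence radius $2h/3L$ of Lemma~\ref{lem:newton_update}, and is not the $\gamma_t$ of that lemma.

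Next I would use the definition $\xtt^\ast=\xs+\vv_t$ from~\eqref{def:v} to write $\xtt-\xtt^\ast=\xtt-\xs-\vv_t$ and apply the triangle inequality:
\[
\left\| \xtt-\xtt^\ast \right\| \leq \left\| \xtt-\xs \right\| + \left\| \vv_t \right\|.
\]
The first term is controlled by~\eqref{eq:third} and monotonicity of $r\mapsto r^2$: since $\|\xt-\xs\|\leq\gamma$,
\[
\left\| \xtt-\xs \right\| \leq \frac{3L}{2h}\left\| \xt-\xs \right\|^2 \leq \frac{3L}{2h}\gamma^2 .
\]
The second term satisfies $\|\vv_t\|\leq\ov$, and the hypothesis $\ov\leq\gamma-\frac{3L}{2h}\gamma^2$ is calibrated precisely so that the two contributions sum to $\gamma$:
\[
\left\| \xtt-\xtt^\ast \right\| \leq \frac{3L}{2h}\gamma^2 + \ov \leq \frac{3L}{2h}\gamma^2 + \gamma - \frac{3L}{2h}\gamma^2 = \gamma .
\]

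The one genuinely delicate point — and the step I expect to require the most care — is recovering the \emph{strict} inequality $<\gamma$ rather than $\leq\gamma$. The needed slack is furnished by the strict containment $\gamma<2h/3L$. Indeed, in passing from~\eqref{eq:first_result} to~\eqref{eq:third} one bounds the denominator as $h-L\|\xt-\xs\|\geq h-L\gamma>h/3$; hence when $\|\xt-\xs\|=\gamma$ the prefactor $\frac{L}{2(h-L\gamma)}$ is \emph{strictly} below $\frac{3L}{2h}$, and when $\|\xt-\xs\|<\gamma$ strict monotonicity of the square already forces $\frac{3L}{2h}\|\xt-\xs\|^2<\frac{3L}{2h}\gamma^2$ (the degenerate case $\xt=\xs$ is immediate, since then $\xtt=\xs$ and $\gamma>0$). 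In every case $\|\xtt-\xs\|<\frac{3L}{2h}\gamma^2$, which upgrades the final chain of inequalities to a strict one and delivers $\|\xtt-\xtt^\ast\|<\gamma$. Apart from this strictness bookkeeping, the argument is a direct application of Lemma~\ref{lem:newton_update} and the triangle inequality.
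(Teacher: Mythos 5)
Your proof is correct and follows essentially the same route as the paper's: decompose via $\xtt^\ast = \xs + \vv_t$, apply the triangle inequality, bound $\left\|\xtt - \xs\right\|$ by $\frac{3L}{2h}\gamma^2$ using identity~\eqref{eq:third}, and absorb the drift through the hypothesis $\ov \leq \gamma - \frac{3L}{2h}\gamma^2$. Your strictness bookkeeping is actually a small improvement on the paper, whose chain of non-strict inequalities only delivers $\left\|\xtt - \xtt^\ast\right\| \leq \gamma$ while asserting the strict conclusion; your case analysis (the strictly smaller prefactor $\frac{L}{2(h-L\gamma)} < \frac{3L}{2h}$ when $\left\|\xt - \xs\right\| = \gamma < \frac{2h}{3L}$, strict monotonicity of the square when $\left\|\xt - \xs\right\| < \gamma$, and the immediate degenerate case $\xt = \xs$) supplies the slack the paper leaves implicit.
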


\begin{proof}
We first re-express the initial sufficient condition for the online Newton update at $t+1$ using~\eqref{def:v}. We have
\begin{align*}
\left\| \xx_{t+1} - \xx_{t+1}^\ast \right\| &= \left\| \xx_{t+1} - \xs + \vv_t \right\| \\
&\leq \left\| \xx_{t+1} - \xs \right\| + \left\| \vv_t \right\| \\
&\leq \frac{3L}{2h} \left\| \xs - \xt \right\|^2 + \left\| \vv_t \right\|,
\end{align*}
where we used the identity~\eqref{eq:third} of Lemma~\ref{lem:newton_update} to obtain the last bound. Thus,
\begin{equation*}
\frac{3L}{2h} \left\| \xs - \xt \right\|^2 + \left\| \vv_t \right\| \leq \gamma \implies \left\| \xx_{t+1} - \xx_{t+1}^\ast \right\| < \gamma. \label{eq:bound_above}
\end{equation*}
We now upper bound $\frac{3L}{2h} \left\| \xs - \xt \right\|^2 + \left\| \vv_t \right\|$. By assumption we have $\left\| \xt - \xs \right\| \leq \gamma$:
\begin{equation*}
\frac{3L}{2h} \left\| \xs - \xt \right\|^2 + \left\| \vv_t \right\| \leq \frac{3L}{2h} \gamma^2 + \left\| \vv_t \right\|.
\end{equation*}
Upper bounding $\left\|\vv_t\right\|$ yields
\begin{equation}
\frac{3L}{2h} \gamma^2 + \left\| \vv_t \right\| \leq \frac{3L}{2h} \gamma^2 + \overline{v} .\label{eq:rhs_impose}
\end{equation}
If we assume that the right-hand term of~\eqref{eq:rhs_impose} can be bounded above by $\gamma$, we obtain
\[
\frac{3L}{2h} \gamma^2 + \overline{v} \leq \gamma \implies \left\| \xx_{t+1} - \xx_{t+1}^\ast \right\| < \gamma.
\]
Hence, if $\overline{v} \leq \gamma - \frac{3L}{2h} \gamma^2$ for $\gamma \in \left( 0 , \frac{2h}{3L}\right)$, then $\left\| \xx_{t+1} - \xx_{t+1}^\ast \right\| < \gamma$.
\end{proof}
Thus, if the initial decision $\xx_t$ is close enough to $\xs$ and the variation $\ov \leq \gamma - \frac{3L}{2h} \gamma^2$, Lemma~\ref{lem:newton_update} can be used sequentially. This is formalized in the next section.

\section{Online Newton's Method}
\label{sec:onm}
We now present \texttt{ONM}, described in Algorithm~\ref{alg:onm}, for online nonconvex optimization.

\begin{algorithm}[h]
\begin{algorithmic}[1]
\STATEx \textbf{Parameters:} Given $h$, $L$, and $\beta$. 
\STATEx \textbf{Initialization:} Receive $\xx_0 \in \Re$ such that $\left\|\xx_0 - \xx_0^\ast\right\| < \gamma = \min\left\{\beta, \frac{2h}{3L} \right\}$.
\medskip

\FOR{$t = 0, 1,2, \ldots, T$}
\STATE Play the decision $\xx_t$.
\STATE Observe the outcome at $t$: $f_t(\xx_t)$.
\STATE Update the decision:
\[
\xtt = \xt - \HH_{t}^{-1} \left(\xt \right) \nabla f_t \left( \xt \right).
\]
\ENDFOR
\end{algorithmic}
\caption{Online Newton's Method (\texttt{ONM})}
\label{alg:onm}
\end{algorithm}

First, we present an $O\left(V_T+1\right)$ general regret bound for \texttt{ONM} in Theorem~\ref{th:newton_method} and then discuss its implication.

\begin{theorem}[Regret bound for \texttt{ONM}]
Suppose that
\begin{enumerate}
  \item $\exists h > 0$ such that $\left\| \HH_{t}^{-1} \left(\xs \right) \right\| \leq \frac{1}{h}$ for all $t=1,2,\ldots,T$;
  \item $\exists \beta, L >0$ such that 
  $\left\| \xx - \xs \right\| \leq \beta \implies \left\| \HH_t\left(\xx\right) - \HH_t\left( \xs \right) \right\| \leq L \left\| \xx - \xs \right\|;$
  \item $\exists \xx_0 \in \Re^n$ such that $\left\| \xx_0 - \xx^\ast_0 \right\| \leq \gamma = \min\left\{\beta, \frac{2h}{3L} \right\}$;
  \item $\overline{v} \leq \gamma - \frac{3L}{2h}\gamma^2$;
  \item $\exists \ell > 0$ such that $\left\| \xx - \xs \right\| \leq \gamma \implies \left| f_t(\xx) - f_t(\xs) \right| \leq \ell \left\| \xx - \xs \right\|$ for all $t=1,2,\ldots,T$.
\end{enumerate}
Then, the regret of \texttt{ONM} is bounded above by
\[
\reg(T) \leq \frac{\ell}{1 - \frac{3L}{2h}\gamma}\left( V_T + \delta \right),
\]
where $\delta=  \frac{3L}{2h}\left(\left\| \xx_0 - \xx^\ast_0 \right\|^2 - \left\| \xx^\ast_T - \xx_T \right\|^2\right)$. Equivalently, $\reg(T) \leq O\left(V_T+1\right)$ and is sublinear in the number of rounds if $V_T < O \left(T\right)$.
\label{th:newton_method}
\end{theorem}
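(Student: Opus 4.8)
The plan is to prove the bound in five short moves: (i) certify by induction that every iterate stays in the region where Lemma~\ref{lem:newton_update} applies; (ii) convert instantaneous regret into distance-to-optimum via Assumption~5; (iii) turn the quadratic-contraction identity~\eqref{eq:third} into a one-step recursion; (iv) telescope that recursion — the crux — to produce exactly the $V_T+\delta$ right-hand side; and (v) collect constants to read off the $O(V_T+1)$ order. Throughout write $a_t = \norm{\xt-\xs}$ and $c = \tfrac{3L}{2h}$, so that $\delta = c\,(a_0^2 - a_T^2)$.

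First I would run the induction. The base case $a_0 \le \gamma$ is Assumption~3. For the step, assume $a_t \le \gamma$; since $\gamma \in (0, \tfrac{2h}{3L})$ and Assumption~4 supplies $\ov \le \gamma - \tfrac{3L}{2h}\gamma^2$, Lemma~\ref{lem:always_sufficient} gives $\norm{\xtt - \xtt^\ast} < \gamma$, i.e. $a_{t+1} < \gamma$. Hence $a_t \le \gamma$ for every round, so Assumptions~1--3 of Lemma~\ref{lem:newton_update} hold at each $t$ and both~\eqref{eq:second} and~\eqref{eq:third} are available. Assumption~5 then bounds each term of the regret, $f_t(\xt) - f_t(\xs) \le \ell\,a_t$, so that $\reg(T) \le \ell \sum_{t=0}^{T} a_t$. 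Next, combining~\eqref{eq:third}, $\norm{\xtt - \xs} \le c\,a_t^2$, with the definition~\eqref{def:v}, $\xtt^\ast = \xs + \vv_t$, and the triangle inequality yields the recursion
\[
a_{t+1} = \norm{(\xtt - \xs) - \vv_t} \le c\,a_t^2 + \norm{\vv_t}.
\]

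The telescoping step is where the $\delta$ term is born and is the part to get exactly right. Rearranging the recursion as $\norm{\vv_t} \ge a_{t+1} - c\,a_t^2$ and then spending the uniform bound $a_{t+1} \le \gamma$ precisely once, to replace $c\,a_{t+1}^2$ by $c\gamma\,a_{t+1}$, gives the per-step inequality
\[
\norm{\vv_t} + c\,a_t^2 - c\,a_{t+1}^2 \;\ge\; a_{t+1} - c\,a_{t+1}^2 \;\ge\; (1-c\gamma)\,a_{t+1}.
\]
Summing over $t = 0,1,\ldots,T-1$ collapses the square differences to $c\,(a_0^2 - a_T^2) = \delta$ and leaves $\sum_{t=0}^{T-1}\norm{\vv_t} = V_T$, so that $(1-c\gamma)\sum_{t=1}^{T} a_t \le V_T + \delta$. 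Dividing by $1-c\gamma > 0$ (positive because $\gamma < \tfrac{2h}{3L}$) and multiplying by $\ell$ bounds the tail of the regret by $\tfrac{\ell}{1-c\gamma}(V_T+\delta)$; the single $t=0$ term contributes at most $\ell\gamma = O(1)$, which is consistent with the stated bound and does not affect its order.

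The real obstacle is the quadratic term $c\,a_t^2$ in the recursion: it does not telescope on its own, and naively bounding every $a_t^2 \le \gamma a_t$ produces a looser linear prefactor rather than the sharp boundary term $\delta$. The resolution is the asymmetric use of $a_{t+1}\le\gamma$ in the crux step, which lets the difference $c\,a_t^2 - c\,a_{t+1}^2$ survive intact so it can telescope. Once that is in place, everything else is bookkeeping: the induction merely licenses the Lemma~\ref{lem:newton_update} identities at each round, and since both $\delta$ and the prefactor $\tfrac{\ell}{1-c\gamma}$ are constants independent of $T$, the bound is $O(V_T+1)$ and hence sublinear whenever $V_T < O(T)$.
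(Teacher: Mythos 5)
Your proposal is correct and follows essentially the same route as the paper's proof: the same induction via Lemma~\ref{lem:always_sufficient}, the same triangle-inequality recursion built from identity~\eqref{eq:third}, the same telescoping of the squared terms to produce $\delta$, the same linearization $a_t^2 \le \gamma\,a_t$ to extract the $(1-\tfrac{3L}{2h}\gamma)$ prefactor, and the same final application of the local Lipschitz assumption. Your per-step rearrangement before summing is merely a reordering of the paper's sum-then-rearrange algebra, and your remark on the $t=0$ term is a fair (and slightly more careful) reading of the paper's own indexing of the regret sum.
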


\begin{proof}
First, observe that the third assumption of Lemma~\ref{lem:newton_update} is always met after an online Newton update for $\overline{v} \leq \gamma - \frac{3L}{2h} \gamma^2$, and that $\gamma \in \left( 0 , \frac{2h}{3L}\right)$ by Lemma~\ref{lem:always_sufficient}. 

Next, we obtain an upper bound on the sum of $\| \xt - \xs \|$ over $t=1,2,\ldots, T$ by adapting~\cite{mokhtari2016online}'s proof technique. We then use this result to upper bound the regret of \texttt{ONM}. For $t \geq 1$, we have
\begin{align}
\left\| \xt - \xs \right\| &= \left\| \xt - \xs - \xtm^\ast + \xtm^\ast  \right\| \nonumber\\
&\leq \left\| \xt - \xtm^\ast \right\| +  \left\| \xtm^\ast  - \xs \right\| \label{eq:used_tri}\\
&\leq \frac{3L}{2h}\left\| \xx^\ast_{t-1} - \xx_{t-1} \right\|^2 +  \left\| \xtm^\ast  - \xs \right\|, \label{eq:used_result3}
\end{align}
where we first used the triangle inequality to obtain~\eqref{eq:used_tri} and then identity~\eqref{eq:third} of Lemma~\ref{lem:newton_update} for~\eqref{eq:used_result3}. Summing~\eqref{eq:used_result3} from $t=1$ to $T$, we obtain
\begin{align*}
\sum_{t=1}^T \left\| \xt - \xs \right\| &\leq \sum_{t=1}^T  \frac{3L}{2h}\left\| \xx^\ast_{t-1} - \xx_{t-1} \right\|^2 + \left\| \xtm^\ast  - \xs \right\| \\
&=\sum_{t=1}^T \frac{3L}{2h}\left\| \xs - \xt \right\|^2  + \sum_{t=1}^T \left\| \xtm^\ast  - \xs \right\|\\
&\quad + \frac{3L}{2h}\left\| \xx_0 - \xx^\ast_0 \right\|^2 - \frac{3L}{2h}\left\| \xx^\ast_T - \xx_T \right\|^2. 
\end{align*}
Thus, we have
\begin{equation}
\sum_{t=1}^T \left\| \xt - \xs \right\| \leq \sum_{t=1}^T \frac{3L}{2h}\left\| \xs - \xt \right\|^2 + V_T + \delta, \label{eq:to_rex}
\end{equation}
where we used the definitions: $V_T = \sum_{t=1}^T \left\| \xtm^\ast  - \xs \right\|$ and $\delta = \frac{3L}{2h}\left(\left\| \xx_0 - \xx^\ast_0 \right\|^2 - \left\| \xx^\ast_T - \xx_T \right\|^2\right)$. Before solving for $\left\| \xs - \xt \right\|$, we re-express~\eqref{eq:to_rex} as:
\begin{equation}
\sum_{t=1}^T \left( \left\| \xt - \xs \right\| - \frac{3L}{2h}\left\| \xt - \xs \right\|^2\right) \leq V_T +\delta. \label{eq:will_call_lemma}
\end{equation}
Using Lemma~\ref{lem:always_sufficient}, we can lower bound $\left\| \xt - \xs \right\|^2$ by $\gamma\left\| \xt - \xs \right\|$ in the left-hand side of~\eqref{eq:will_call_lemma}. We then obtain
\begin{equation*}
\sum_{t=1}^T \left( \left\| \xt - \xs \right\| - \frac{3L}{2h}\gamma\left\| \xt - \xs \right\|\right) \leq V_T +\delta,
\end{equation*}
and equivalently,
\begin{equation*}
\sum_{t=1}^T \left\| \xt - \xs \right\| \left( 1 - \frac{3L}{2h}\gamma \right) \leq  V_T +\delta. \label{eq:reused}
\end{equation*}
where $\left( 1 - \frac{3L}{2h}\gamma \right) > 0$ because $\gamma \in \left(0,\frac{2h}{3L}\right)$. The sum of errors is thus bounded by
\begin{equation}
\sum_{t=1}^T \left\| \xt - \xs \right\| \leq \left( 1 - \frac{3L}{2h}\gamma \right)^{-1}\left( V_T + \delta \right). \label{eq:bound_se}
\end{equation}
Now, recall the definition of the regret:
\[
\reg(T) = \sum_{t=1}^T f_t(\xt) - f_t(\xs).
\]
By assumption, $f_t$ is locally $\ell$-Lipschitz for all $\xs$. The regret can be bounded above by
\begin{equation}
\reg(T) \leq \sum_{t=1}^T \ell \left\| \xt - \xs \right\|. \label{eq:lip_regret}
\end{equation}
Substituting~\eqref{eq:bound_se} in~\eqref{eq:lip_regret} yields
\[
\reg(T) \leq \frac{\ell}{1 - \frac{3L}{2h}\gamma}\left( V_T + \delta \right),
\]
which completes the proof.
\end{proof}

By Theorem~\ref{th:newton_method}, \texttt{ONM} regret is bounded above by $O\left(V_T+1\right)$. The regret of $\sigma$\texttt{OGD} for strongly convex functions has an upper bound of the same order~\cite{mokhtari2016online}. Similar to $\sigma$\texttt{OGD}, our approach requires $V_T < O\left( T \right)$ to have a sublinear regret. An advantage of our approach is that it does not require (strong-)convexity or Lipschitz continuous gradients. Assumption~1 of Theorem~\ref{th:newton_method} requires the Hessian to be nonsingular at $\xx_t^*$. It does not require $f_t$ to be strongly-convex nor even convex. An example of such an objective function is the least-squares range measurement problem for localizing a moving target by an array of sensors. This problem is nonconvex, but at its optimum, the Hessian is nonsingular and Lipschitz continuous. This example will be explored in detail in Section~\ref{sec:example}. Other potential applications of \texttt{ONM} are concave-convex games~\cite[Section 10.3.4]{boyd2004convex}. We note that \texttt{ONM} can be applied to track any kind of stationary point, e.g., a saddle point arising from minimax problems.

\texttt{OGD} and \texttt{DMD} for convex functions both have $O\left(\sqrt{T}\left(V_T + 1\right)\right)$ regret bounds~\cite{zinkevich2003online,hall2015online}. This requires $V_T < O\left( \sqrt{T} \right)$ and convexity for sublinear regret. The example presented in Section~\ref{sec:example} emphasizes the importance of our result as standard OCO cannot offer a performance guarantee on nonconvex loss functions and may perform poorly. \texttt{OGD}, $\sigma$\texttt{OGD} and \texttt{DMD} do, however, have extensions to handle time-invariant constraints in the dynamic regret setting~\cite{neely2017online,chen2017online,cao2018virtual,paternain2017online,tang2018feedback,lesage2018online}. 

\begin{remark}
Convexity is not required for \texttt{ONM} to attain optimality. Assuming local Lipschitz properties, if $\left\|\xx_0 - \xx_0^\ast\right\| < \gamma$ and variations are bounded, the decision $\xx_t$ remains in the neighbourhood of the optimum regardless of the convexity of the loss function.
\end{remark}

We now show that the regret bound of \texttt{ONM} reduces to a constant when the total variation is bounded. We first present a lemma about the convergence of a quadratic map, and then proceed to the regret bound.

\begin{lemma}[Quadratic map convergence]
Let $x_{n+1} = c x_n^2 + v$ for $n \in \mathbb{N}$ and where $c > 0$ and $v > 0$. If $x_0 \in \left[ 0 , \overline{x}\right)$ and $v \leq \frac{1}{4c}$, then $\left( x_n \right) \to \ux$ where
\begin{align*}
\ox &= \frac{1 + \sqrt{1-4cv}}{2c},\\ 
\ux &= \frac{1 - \sqrt{1-4cv}}{2c}.
\end{align*}
\label{lem:quad_map_c}
\end{lemma}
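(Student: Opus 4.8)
The plan is to treat the recursion as iteration of the map $g(x) = cx^2 + v$ and to exploit the fact that $\ux$ and $\ox$ are precisely its two fixed points. A fixed point solves $cx^2 - x + v = 0$, whose roots are exactly $\ux$ and $\ox$ (real because $v \leq \tfrac{1}{4c}$); equivalently, $g(x) - x = c(x-\ux)(x-\ox)$. This factorization records the sign of $g(x)-x$: it is positive for $x<\ux$, negative for $\ux < x < \ox$, and positive again for $x>\ox$. I would also note two structural facts, namely $g'(x) = 2cx \geq 0$, so $g$ is nondecreasing on $[0,\infty)$, and $\ux + \ox = 1/c$ (sum of roots).

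First I would establish forward invariance of $[0,\ox)$: since $g$ is increasing on $[0,\ox]$ with $g(\ox)=\ox$, any $x_n \in [0,\ox)$ yields $x_{n+1} = g(x_n) \in [v,\ox) \subseteq [0,\ox)$, using $v>0$ and $v<\ox$. Thus the whole orbit remains in $[0,\ox)$ and never reaches the repelling fixed point $\ox$. Then I would split into two monotone cases. If $x_0 \in [0,\ux)$, the sign analysis gives $x_{n+1}-x_n = g(x_n)-x_n > 0$, while monotonicity of $g$ with $g(\ux)=\ux$ keeps $x_n < \ux$; hence $(x_n)$ is increasing and bounded above by $\ux$. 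If $x_0 \in (\ux,\ox)$, the same two ingredients give $x_{n+1}<x_n$ and $x_n>\ux$, so $(x_n)$ is decreasing and bounded below by $\ux$. The case $x_0 = \ux$ is the constant orbit. In either nontrivial case the monotone convergence theorem yields a limit $L$, and passing to the limit in $x_{n+1}=g(x_n)$ by continuity forces $L$ to be a fixed point lying in $[0,\ux]$ or $[\ux,x_0]$ respectively; since $x_0 < \ox$, the only admissible fixed point is $\ux$, so $(x_n)\to\ux$.

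The part requiring the most care is ensuring the limit is $\ux$ rather than $\ox$: this is exactly where the hypothesis $x_0 < \ox$ enters, confining the orbit to one side of the unstable fixed point. I would also flag the degenerate boundary $v = \tfrac{1}{4c}$, where $1-4cv=0$ and $\ux=\ox=\tfrac{1}{2c}$; then $[0,\ox)=[0,\ux)$, only the increasing case occurs, and the orbit still climbs to the now unique, semistable fixed point.

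As an optional refinement that additionally supplies a convergence rate, I would use the error recursion $e_{n+1} = x_{n+1}-\ux = c(x_n^2-\ux^2) = c(x_n+\ux)\,e_n$, where $e_n = x_n - \ux$. Since the orbit lies in $[0,\ox)$ and $\ux+\ox = 1/c$, the multiplier obeys $c(x_n+\ux) < c(\ox+\ux) = 1$; in the case $x_0<\ux$ it is moreover uniformly bounded by $c(2\ux) = 1-\sqrt{1-4cv} < 1$, giving geometric convergence, while in the case $x_0>\ux$ one passes to the limit in this recursion to rule out the spurious fixed point at $\ox$.
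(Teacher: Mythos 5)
Your proof is correct and takes essentially the same route as the paper's appendix proof: split the interval $\left[0,\ox\right)$ at the fixed point $\ux$, use the sign of $cx^2 - x + v$ to get strict monotonicity on each side, keep the orbit bounded by the fixed-point values, and conclude with the monotone convergence theorem. If anything, your write-up is slightly more careful than the paper's, since you identify the limit by passing to the limit in $x_{n+1}=g(x_n)$ via continuity (where the paper simply asserts the infimum/supremum equals $\ux$ because it is a fixed point), and you additionally note forward invariance of $\left[0,\ox\right)$, the degenerate case $v=\frac{1}{4c}$, and a geometric convergence rate, none of which changes the underlying argument.
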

The proof is provided in the Appendix. Let
\begin{align}
\oE&= \frac{h}{3L}\left(1 + \sqrt{1-\frac{6L\left(\oV + \left\| \xx_0 - \xx^\ast_0 \right\| \right)}{h}}\right),  \label{eq:oE} \\
\uE &= \frac{h}{3L}\left(1 - \sqrt{1-\frac{6L\left(\oV + \left\| \xx_0 - \xx^\ast_0 \right\| \right)}{h}}\right). \label{eq:uE}
\end{align}

\begin{corollary}[Constant regret bound of \texttt{ONM}]
Suppose all the assumptions of Theorem~\ref{th:newton_method} are met. If $\gamma  < \oE$ and $\oV + \left\| \xx_0 - \xx_0^* \right\| \leq \frac{h}{6L}$, then the regret of \texttt{ONM} is upper bounded by
\[
\reg(T) \leq \ell \uE.
\]
Equivalently, we have $\reg(T) \leq O\left( 1\right)$.
\label{thm:o1}
\end{corollary}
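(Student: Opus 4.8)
The plan is to collapse the cumulative tracking error into a single scalar quadratic map and then invoke Lemma~\ref{lem:quad_map_c}. Write $e_t = \left\| \xt - \xs \right\|$ and, for brevity, $c = \frac{3L}{2h}$. The starting point is the one-step recursion already embedded in the proof of Theorem~\ref{th:newton_method}: combining identity~\eqref{eq:third} of Lemma~\ref{lem:newton_update} with the definition~\eqref{def:v} of the variation gives $e_t \le c\, e_{t-1}^2 + \left\| \vv_{t-1} \right\|$ for every $t \ge 1$, exactly as in~\eqref{eq:used_result3}. Since all hypotheses of Theorem~\ref{th:newton_method} are assumed, Lemmas~\ref{lem:newton_update} and~\ref{lem:always_sufficient} hold along the whole trajectory, so this recursion is valid for all $t$.

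First I would pass from the per-round errors to their partial sums $S_T = \sum_{t=0}^T e_t$. Summing the recursion from $t=1$ to $T$, using $\sum_{t=0}^{T-1} e_t^2 \le \big( \sum_{t=0}^{T-1} e_t \big)^2 = S_{T-1}^2$ (valid since the $e_t$ are nonnegative) together with $\sum_{t=0}^{T-1}\left\| \vv_t \right\| = V_T \le \oV$, and finally folding the initial term $e_0 = \left\| \xx_0 - \xx_0^\ast \right\|$ into the constant, yields the clean scalar bound
\[
S_T \le c\, S_{T-1}^2 + w, \qquad w := \oV + \left\| \xx_0 - \xx_0^\ast \right\|.
\]
This is the key step: the assumption that the \emph{total} variation $\oV$ (not merely the per-round $\ov$) is bounded is what lets every perturbation be absorbed into the single additive constant $w$, turning the vector dynamics into a genuine one-dimensional quadratic map.

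Next I would compare $(S_T)$ against the reference iteration $x_{n+1} = c\, x_n^2 + w$ with $x_0 = e_0$. Because $x \mapsto c x^2 + w$ is nondecreasing on $[0,\infty)$ and $S_0 = x_0$, a one-line induction gives $S_T \le x_T$ for all $T$. I would then check that the two hypotheses of Lemma~\ref{lem:quad_map_c} hold for this map: the discriminant condition $w \le \frac{1}{4c} = \frac{h}{6L}$ is precisely the assumption $\oV + \left\| \xx_0 - \xx_0^\ast \right\| \le \frac{h}{6L}$, and the basin condition $x_0 \in [0,\ox)$ becomes $e_0 \le \gamma < \oE$, which holds because the upper fixed point $\ox$ of this map equals the quantity $\oE$ of~\eqref{eq:oE} and $\gamma < \oE$ is assumed. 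Lemma~\ref{lem:quad_map_c} then gives $x_T \to \uE$, with $\uE$ of~\eqref{eq:uE} the attracting lower fixed point; combined with the monotonicity of the iteration (equivalently, forward-invariance of $[0,\max\{e_0,\uE\}]$) this gives the uniform bound $x_T \le \max\{e_0,\uE\}$.

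The conclusion is then immediate: since $\sum_{t=1}^T e_t = S_T - e_0 \le x_T - e_0 \le \max\{e_0,\uE\} - e_0 \le \uE$, and each regret summand is controlled by the local Lipschitz assumption via $f_t(\xt) - f_t(\xs) \le \ell\, e_t$, I obtain $\reg(T) \le \ell \sum_{t=1}^T e_t \le \ell\, \uE = O(1)$. The main obstacle I anticipate is purely in the bookkeeping of the second step: making sure the $\sum(\cdot)^2 \le (\sum \cdot)^2$ relaxation together with the absorption of $e_0$ and $\oV$ produces exactly the constant $w$ whose fixed points coincide with the stated $\oE,\uE$, and verifying that $\gamma < \oE$ is the correct translation of the lemma's basin-of-attraction requirement rather than the superficially more natural $\gamma < \uE$.
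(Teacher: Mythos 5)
Your proposal is correct and follows essentially the same route as the paper's own proof: the identical one-step recursion $e_t \le c\,e_{t-1}^2 + \left\lVert \vv_{t-1}\right\rVert$, the same summation with $\sum e_t^2 \le \bigl(\sum e_t\bigr)^2$ and absorption of $\oV + \left\lVert \xx_0 - \xx_0^\ast \right\rVert$ into a single constant, the same application of Lemma~\ref{lem:quad_map_c} with $\ox = \oE$, $\ux = \uE$, and the same local-Lipschitz finish. Your only deviation is a refinement, not a different idea: you make the comparison-sequence induction $S_T \le x_T$ explicit and extract the uniform bound $x_T \le \max\{e_0,\uE\}$ for each finite $T$, whereas the paper reaches the same conclusion through a slightly less explicit limiting argument on the nondecreasing sums $E_{T+n}$.
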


\begin{proof}
Consider the error at round $t$, $\left\| \xt - \xs \right\|$. By the triangle inequality, 
\begin{align*}
\left\| \xt - \xs \right\| &= \left\| \xt - \xs - \xtm^\ast + \xtm^\ast  \right\| \nonumber\\
&\leq \left\| \xt - \xtm^\ast \right\| +  \left\| \xtm^\ast  - \xs \right\|.
\end{align*}
Using identity~\eqref{eq:third} of Lemma~\ref{lem:newton_update}, we have
\begin{equation}
\left\| \xt - \xs \right\|\leq c\left\| \xx^\ast_{t-1} - \xx_{t-1} \right\|^2 +  \left\| \xtm^\ast  - \xs \right\|, \label{eq:to_seq}
\end{equation}
where $c=\frac{3L}{2h}$. Let $e_t = \left\| \xt - \xs \right\|$ and recall that $v_t = \left\| \xtt^\ast  - \xs \right\|$. We rewrite~\eqref{eq:to_seq} as
\[
e_t \leq c e_{t-1}^2 + v_{t-1}.
\]
Summing through time yields
\[
\sum_{t=1}^T e_t \leq c\sum_{t=1}^T e_{t-1}^2 + \sum_{t=1}^T v_{t-1},
\]
or equivalently,
\[
\sum_{t=0}^T e_t \leq c\sum_{t=0}^{T-1} e_{t}^2 + \sum_{t=0}^{T-1} v_t + e_0.
\]
The sum of squares is bounded above by the square of the sum, and thus
\begin{equation}
\sum_{t=0}^T e_t \leq c\left(\sum_{t=0}^{T-1} e_{t}\right)^2 + V_T + e_0, \label{eq:to_Et}
\end{equation}
where we also used the definition $V_T = \sum_{t=0}^{T-1} v_t$. Let $E_T = \sum_{t=0}^T e_t$. We re-express~\eqref{eq:to_Et} as
\[
E_T \leq c E_{T-1}^2 + V_T + e_0.
\]
Finally, we have
\begin{equation}
E_T \leq c E_{T-1}^2 + \oV + e_0, \label{eq:E_recurrence}
\end{equation}
where $V_T = \sum_{t=0}^{T-1} \left\| \xtt^\ast  - \xs \right\| \leq \oV$. Note that we obtain $\overline{E}$ and $\underline{E}$ by applying Lemma~\ref{lem:quad_map_c} to the sequence obtained when~\eqref{eq:E_recurrence} is met with equality.

The regret is bounded above by
\begin{align}
\reg(T) &= \sum_{t=1}^T f_t(\xt) - f_t(\xs) \nonumber\\
&\leq \sum_{t=0}^T \ell \left\| \xt - \xs \right\|, \label{eq:regret_rhs}
\end{align}
because $f_t$ is locally $\ell$-Lipschitz around $\xs$ for all $t$. The right-hand side of~\eqref{eq:regret_rhs} is a non-decreasing sum and thus
\[
\reg(T) \leq \ell \lim_{n \to \infty} \sum_{t=0}^{T+n} \left\| \xt - \xs \right\|,
\]
and equivalently,
\[
\reg(T) \leq \ell \lim_{n \to \infty} E_{T+n}.
\]
By assumption, $E_0 = \left\| \xx_0 - \xx^\ast_0 \right\| < \oE$ and $\oV + \left\| \xx_0 - \xx_0^* \right\| \leq \frac{h}{6L}$. Therefore Lemma~\ref{lem:quad_map_c} yields
\[
\reg(T) \leq \ell \uE,
\]
which completes the proof.
\end{proof}
A constant bound is desirable because it implies that (i) the total error by the algorithm is bounded and (ii) the average regret will converge more rapidly to $0$ than a bound that depends on $T$ or $V_T$.

The constant bound provided by Corollary~\ref{thm:o1} is strictly tighter than the bound in Theorem~1 when, in addition to Corollary~\ref{thm:o1}'s assumptions, we have
\[
\uE (1-y) < V_T +\delta,
\]
for $y \in \left(0, \frac{3L}{2h}\oE \right)$. For example, if $\left\| \xx_0 - \xx_0^* \right\|= \left\| \xx_T - \xx_T^* \right\|=0$, i.e., the initial optimum is known and the decisions made by the algorithm approximately converged after $T$ rounds given a fixed total variation, then the constant bound is strictly less than Theorem~1's bound if
\[
\uE (1-y) < V_T \leq \frac{h}{6L},
\]
for $y \in \left(0, \frac{3L}{2h}\oE \right)$. We remark that there always exists $\overline{y} \in \left(0, \frac{3L}{2h}\oE \right)$ such that $\uE (1-\overline{y}) < \frac{h}{6L}$.

Lastly, a tighter regret bound implies that the total maximum error made by the algorithm is smaller. It thus provides a better representation of the algorithm's actual performance.

\section{Example}
\label{sec:example}
In this section, we evaluate the performance of \texttt{ONM} in a numerical example based on localizing a moving target~\cite{cao2006sensor}. Let $\xx_t \in \Re^2$ be the location of the target at time $t$, and $\aa_i \in \Re^2$ be the position of sensor $i$. The range measurement is given by
\[
d_i = \left\| \xx_t - \aa_i \right\| + w_i,
\]
for $i=1,2,\ldots m$, where $m$ is the number of sensors and $w_i$ models noise. Localizing a target based on range measurements leads to the following nonlinear, nonconvex least-squares problem:
\begin{equation}
\min_{\xx_t \in \Re^n} \sum_{i=1}^m \left( \left\| \xx_t - \aa_i \right\| - d_i \right)^2, \label{eq:loss_func}
\end{equation}
for $t=1,2,\dots, T$. The objective is to sequentially estimate the position of the target while only having access to past information. We use \texttt{ONM} to track the target's location in real-time. No other online convex optimization algorithms can guarantee performance on nonconvex loss functions like~\eqref{eq:loss_func}.

We compare \texttt{ONM} with \texttt{OGD}~\cite{zinkevich2003online}. As previously mentioned, \texttt{OGD} does not guarantee adequate performance, but fits the sequential nature of the problem. 

Let $\xs$ be the actual position of the target at time $t$. The target's position evolves as $\xx_{t+1}^\ast = \xs + \vv_t$, where we describe $\vv_t$ in the next subsections. We set $\xx_0^\ast = \begin{pmatrix} 2 & 1 \end{pmatrix}\Tr$ and assume $\xx_0^\ast$ is known, i.e., $\xx_0 = \xx_0^\ast$. We consider three sensors. The sensors are located at $\aa_1=\begin{pmatrix} \frac{1}{2} & \frac{1}{2} \end{pmatrix}\Tr$, $\aa_2 = \begin{pmatrix} 0 & \frac{1}{2} \end{pmatrix}\Tr$, and $\aa_3 = \begin{pmatrix} \frac{1}{2} & 0 \end{pmatrix}\Tr$. 
Each sensor $i$ produces a range measurement, $d_i$, which is corrupted by Gaussian noise $w\sim \mathrm{N}(0,\sigma_w)$. For all simulations, we set $\gamma = \frac{h}{3L}$ and \texttt{OGD}'s step size to $\eta = 1/\sqrt{T}$.

\subsection{Moving target localization with \texttt{ONM}}
We first evaluate the performance of \texttt{ONM} in the general case. We set $\vv_t = \frac{(-1)^{b_t} 0.0025}{\sqrt{2t}} \mathbf{1}$ where $b_t \sim \textrm{Bernoulli}(0.5)$ and let $\sigma_{w} = 0.01\%$. All sufficient conditions of Theorem~\ref{th:newton_method} are satisfied. Figure~\ref{fig:regret_tracking} presents the experimental regret of \texttt{ONM} and \texttt{OGD} for localizing a moving target averaged over $1000$ simulations. The regret is sublinear in both cases, but \texttt{ONM} attains a much lower regret than \texttt{OGD}. The experimental regret is bounded above by Theorem~\ref{th:newton_method}'s bound. The bound is not shown in Figure~\ref{fig:regret_tracking} because it is several order of magnitude larger than the experimental regret.

\begin{figure}[!t]
  \centering
  \includegraphics[width=1\columnwidth]{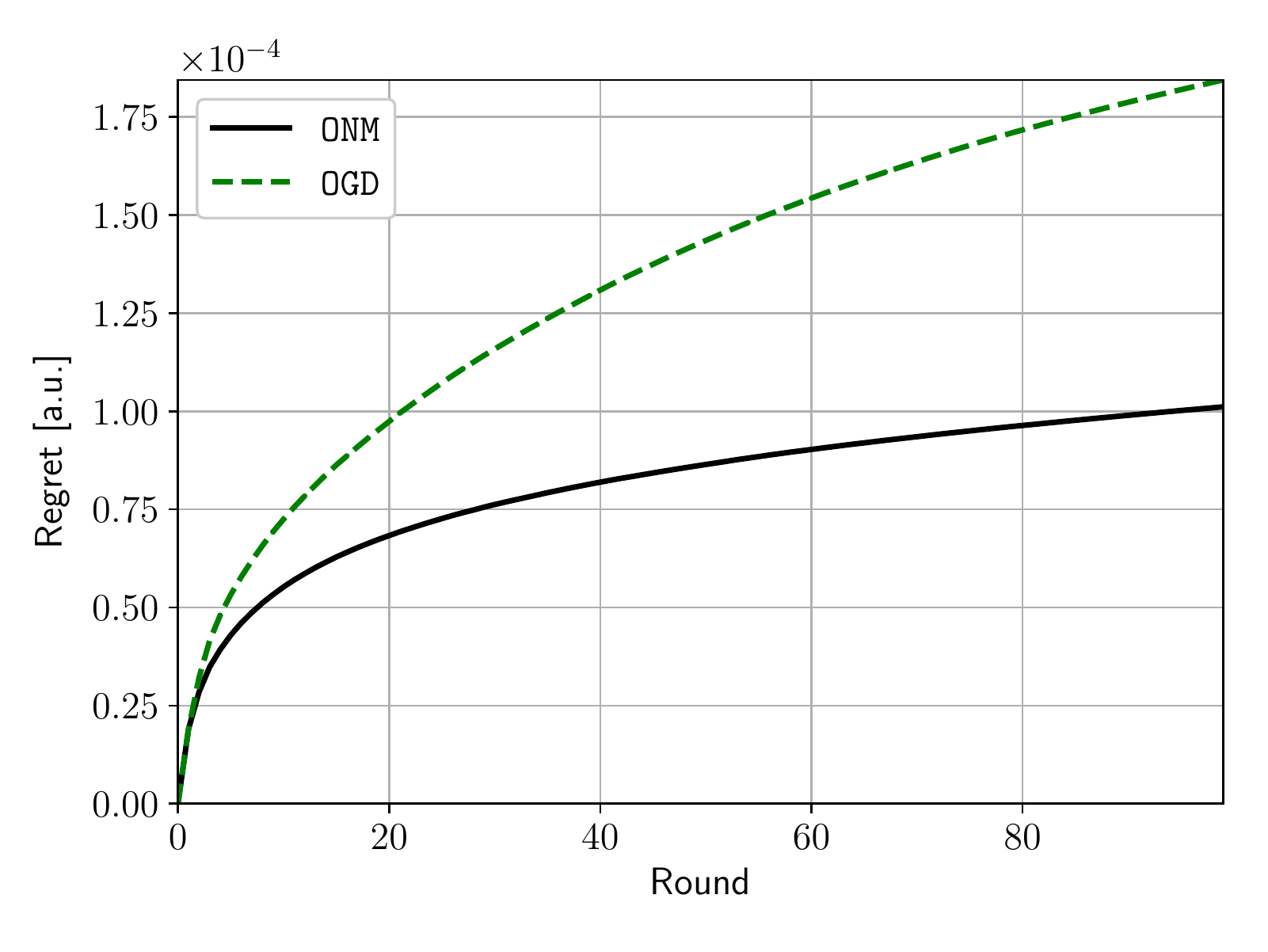}
  \vspace{-0.85cm} 
  \caption{Regret of moving target localization with \texttt{ONM} (averaged out of 1000 simulations)}
  \label{fig:regret_tracking}
\end{figure}

The tracking performance is presented in Figure~\ref{fig:target}. \texttt{OGD} effectively fails to follow the target. \texttt{ONM}, which is guaranteed to stay in the neighborhood of the target, remains accurate at every time step.

\begin{figure}[!t]
  \centering
  \includegraphics[width=1\columnwidth]{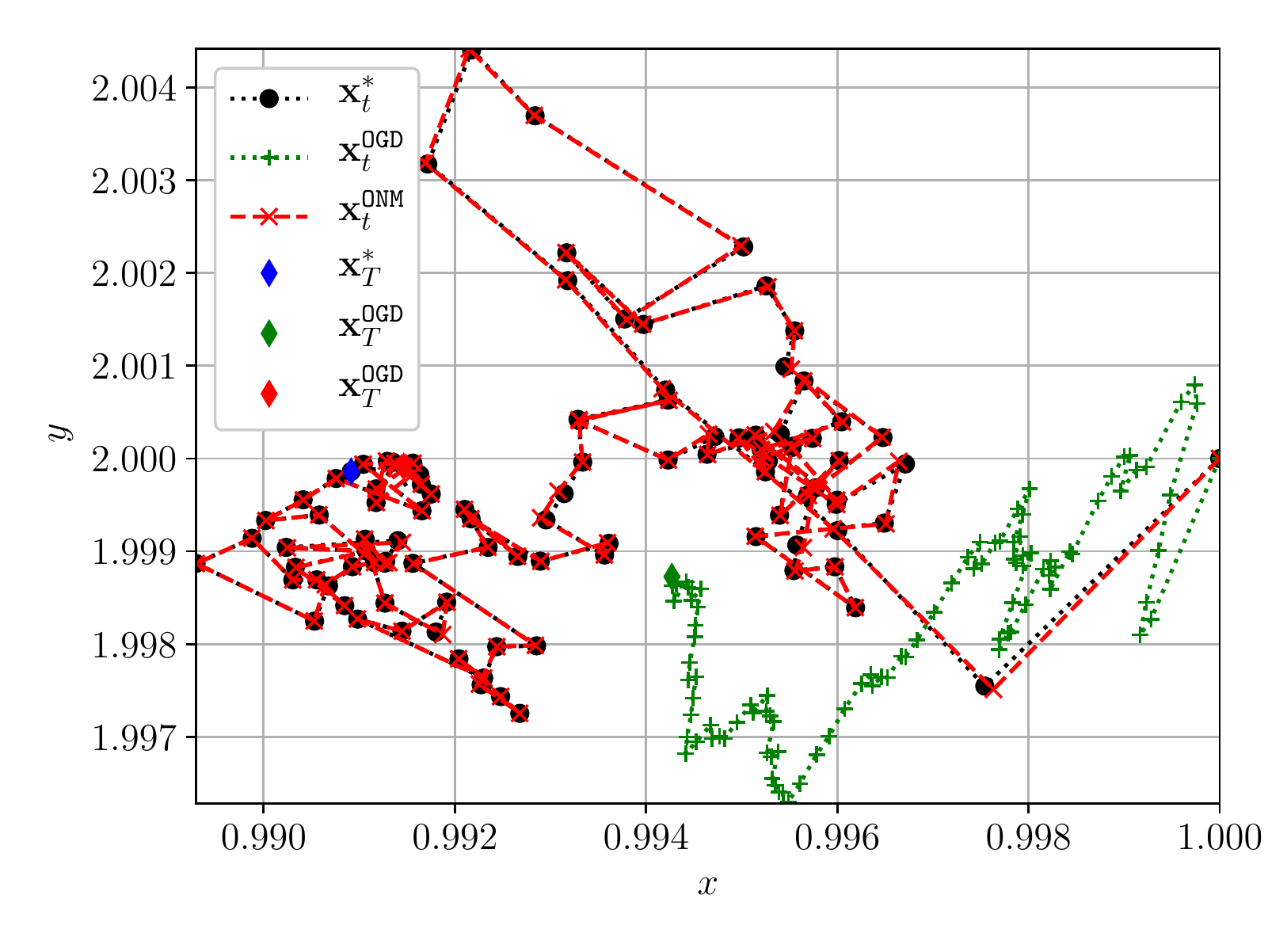}
  \vspace{-0.85cm} 
  \caption{Localization of a moving target with \texttt{ONM}}
  \label{fig:target}
\end{figure}

\subsection{Constant regret bound}
We test \texttt{OMN} when there is limited variation in the target's location. This case illustrates the $O\left(1\right)$ regret bound of Corollary~\ref{thm:o1}. We set $\ss_t = \frac{6 (-1)^b \overline{V}}{\sqrt{2} t^2 \pi^2} \mathbf{1}$, so that $\sum_{t=1}^T \left\| \ss_t \right\| < \sum_{t=1}^\infty \left\| \ss_t \right\| = \overline{V}$, and set $\overline{V}$ according to the sufficient condition of Corollary~\ref{thm:o1}. Lastly, we let $\sigma_w = 1 \times 10^{-6}$. Figure~\ref{fig:regret_tracking_o1} shows the constant regret for \texttt{ONM}. As in the previous example, \texttt{ONM}'s regret is lower than \texttt{OGD}'s. Figure~\ref{fig:regret_tracking_o1} shows the tracking performance. \texttt{ONM} outperforms \texttt{OGD}, which never reaches the vicinity of the target's position. The zoomed-in section of Figure~\ref{fig:target_o1} presents the last ten rounds of the simulations. Here we see that \texttt{ONM} achieves virtually perfect performance when measurement noise is low and $\vv_t$ extremely small.

\begin{figure}[!t]
  \centering
  \includegraphics[width=1\columnwidth]{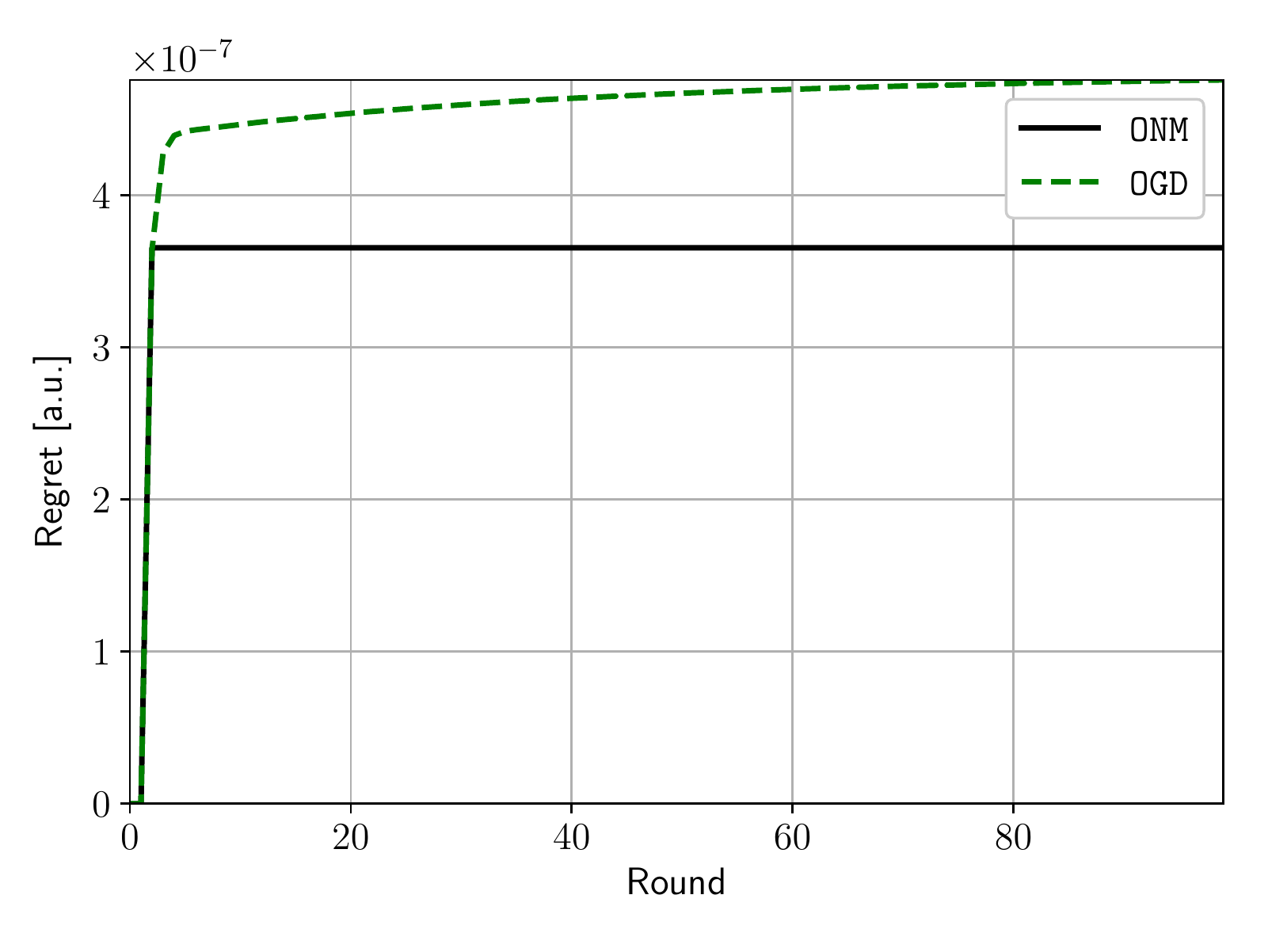}
  \vspace{-0.85cm} 
  \caption{Regret of moving target localization with \texttt{ONM} for limited total variation}
  \label{fig:regret_tracking_o1}
\end{figure}

\begin{figure}[!t]
  \centering
  \includegraphics[width=1\columnwidth]{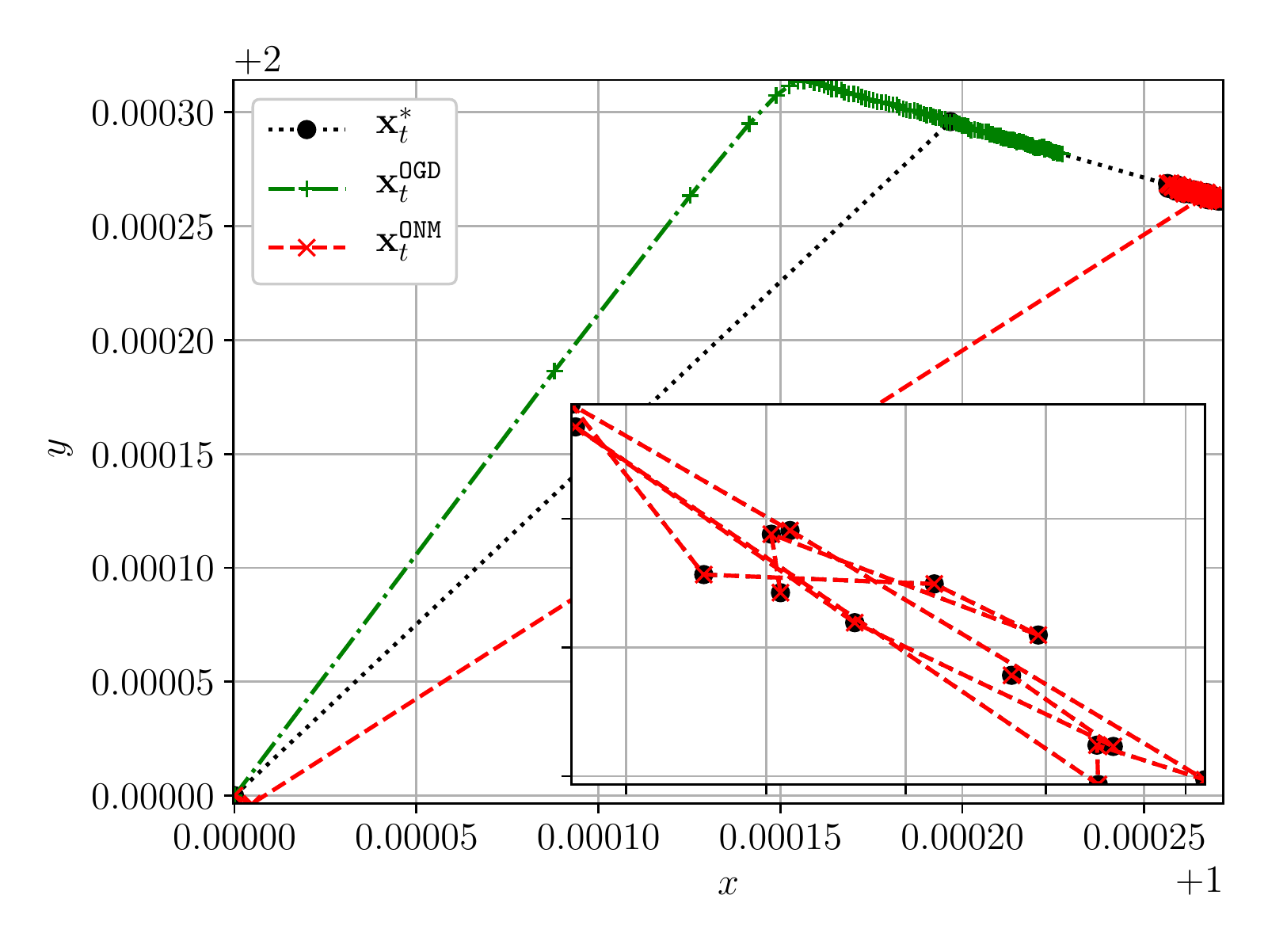}
  \vspace{-0.85cm} 
  \caption{Localizing a moving target with \texttt{ONM} for limited total variation (zoomed section: 10 last rounds)}
  \label{fig:target_o1}
\end{figure}

\section{Conclusion}
\label{sec:con}

In this work, we presented a second-order approach, the online Newton's method (\texttt{ONM}), for online nonconvex optimization problems. We only assume local Lipschitz properties on the objective function and bounded variations between the round optima. We provide two regret bounds: an $O\left(V_T+1\right)$ regret bound for the general case and a constant $O\left(1\right)$ regret bound for when the total variation between optima is limited. The first bound is similar to results in the literature but with an important difference: it also holds for nonconvex problems. We conclude with a moving target localization example using \texttt{ONM}. We show that our approach properly tracks a moving target using noisy range measurements and outperforms online gradient descent. In future work, we will investigate tighter regret bounds for \texttt{ONM} by taking advantage of the quadratic convergence rate of the Newton's update shown in Lemma~\ref{lem:newton_update}. Another possible future direction is combining first-order and second-order updates in a hybrid framework, which could result in better regret bounds under less stringent requirements.

\appendix[Proof of Lemma~\ref{lem:quad_map_c}]
\label{app:quad}
We split the initial condition of the sequence $(x_n)$, $x_0 \in [0,\overline{x})$, into two subsets: (i) $x_0 \in \left[\ux, \ox \right)$, and (ii) $x_0 \in \left[0, \ux\right]$.

\paragraph*{Case (i) $x_0 \in \left[\ux, \ox \right)$} First, we show that $x_n$ is strictly decreasing or equivalently, $x_{n+1} <  x_n$. This condition is equivalent to $c x_{n}^2 - x_n + v < 0$,
which holds for all $x_n \in \left( \ux , \ox \right)$ and $v\leq \frac{1}{4c}$. This further implies that $x_n<  \ox$ if $x_0 \in \left[\ux, \ox \right)$. As  result, $x_n$ never converges to $\ox$ even though $\ox$ is a fixed point of the map on the left-hand-side of the aforementioned inequality. Moreover, letting $x_n = \ux$ for any $n\in \NN$, we have
\begin{align*}
x_{n+1}&= c \ux^2 + v\\
&= c \left(\frac{1 - \sqrt{1-4cv}}{2c} \right)^2 + v\\
&= \frac{1 - \sqrt{1-4cv}}{2c}= \ux.
\end{align*}
Thus, $x_n > x_{n+1}$ and $\ux \leq x_{n} < \ox$ for all $n \in \NN$  assuming $x_0 \in \left[\ox ,\ux\right)$. The infimum, $\inf_{n \in \NN} x_n  = \ux$, is attained because of the strict monotonicity and $\ux$ being a fixed point of the map. Now, by the monotone convergence theorem, a monotonically decreasing and bounded below sequence converges and it converges to its infimum. Thus, $(x_n) \to \ux$ for any $x_0 \in \left[\ux, \ox\right)$ and $v \leq \frac{1}{4c}$.

\paragraph*{Case (ii) $x_0 \in \left[0, \ux \right]$} Similar to above, we show that the sequence is monotonically increasing, or equivalently, $x_n \leq x_{n+1}$. Similar to the previous case, this is equivalent to $0 \leq c x_n^2 - x_n + v$. 
The inequality holds strictly for all $x_n \in \left[0, \ux \right)$. Moreover, if $x_n = \ux$ for $n\in \NN$, then $x_{n+1} = \ux$ from the above demonstration and $\ux$ is a fixed point of the map. 

Hence, $x_n \in \left[0, \ux \right]$ for all $n \in \NN$ if $x_0 \in \left[0, \ux \right]$. 
The supremum, $\sup_{n\in \NN} x_n = \ux$, is reached for any initial condition because of the strict monotonicity and the fact that $\ux$ is a fixed point of the map.
Similarly to the previous case, by the monotone convergence theorem, a monotonically increasing and bounded above sequence converges and it converges to its supremum. Therefore, $(x_n) \to \ux$ for any $x_0 \in \left[0, \ux\right]$ and $v \leq \frac{1}{4c}$. Putting the results for two initial condition intervals together then proves the lemma.\hfill \QED


\end{document}